\theoremstyle{plain}
\newtheorem{theorem}{Theorem}[section]
\newtheorem{proposition}[theorem]{Proposition}
\newtheorem{lemma}[theorem]{Lemma}
\theoremstyle{definition}
\theoremstyle{remark}
\theoremstyle{conjecture}
\newtheorem{conjecture}[theorem]{Conjecture}
\theoremstyle{condition}
\title[Special perfects]{On odd perfect numbers of special forms}
\date{\today}
\author[L. H. Gallardo]{Luis H. Gallardo}
\address{Univ. Brest,
UMR CNRS 6205\\
Laboratoire de Math\'ematiques de Bretagne Atlantique
6, Av. Le Gorgeu, C.S. 93837,  Cedex 3, F-29238 Brest\\
France.}
\email{Luis.Gallardo@univ-brest.fr}
\author[O. Rahavandrainy]{Olivier Rahavandrainy}
\address{Univ. Brest,
UMR CNRS 6205\\
Laboratoire de Math\'ematiques de Bretagne Atlantique
6, Av. Le Gorgeu, C.S. 93837,  Cedex 3, F-29238 Brest\\
France.}
\email{Olivier.Rahavandrainy@univ-brest.fr}
\subjclass[2010]{Primary 11A25; Secondary 11A07}
\keywords{Sums of divisors, odd perfect numbers, cyclotomic polynomials, congruences.}
\begin{document}


\begin{abstract}
We give necessary conditions for perfection of some families of odd numbers with special multiplicative forms. Extending earlier work of Steuerwald, Kanold, McDaniel et al.
\end{abstract}

\maketitle

\section{Introduction}
\label{intro}
A number $n$ is perfect if and only if it is equal to the sum of all its proper divisors. Let $n$ be an odd perfect number. Euler \cite{sept} proved
\begin{equation}
n = p^{4k+1} c^2
\end{equation}
where $p$ is a prime number congruent  to $1$ modulo $4$, $k \geq 0$ is a nonnegative integer and $c >0$ is a positive integer non divisible by $p$. Put $\alpha = 4k+1$, and $c = a_1^{e_1} \cdots  q_t^{e_t}$ with prime numbers $q_1,\ldots,q_t$ and positive integers $e_1,\ldots,e_t$. If all $e$'s are equal we put $\beta = e_1 = \ldots = e_t$, if all but one are equal, we put, say, $\gamma = e_2= \ldots = e_t$. If all  $e$'s are congruent to $a$ modulo $m$ we write $\delta \equiv e_1 \ldots \equiv e_t \equiv a \pmod m$. Put $g = \gcd(2e_1+1,\ldots,2e_t+1)$.

Steuerwald \cite{dixneuf} proved that $\beta \neq 1$, Kanold \cite{once} proved that $\beta \neq 2$, and that $g^4$ divides $n$. He deduced that none of $9,15,21,33$ can divide $g$. Brauer \cite{uno} and Kanold \cite{douze} independently proved that $e_1=2,\gamma=1$, is also impossible; see Brauer's discussion of the two proofs in \cite{dos}. Kanold \cite{treize} announced and Kanold \cite{catorce} proved that when $\gamma = 1$ one has $e_1 \notin \{3,4\}$. Kanold \cite{quinze} proved that both cases $e_1=e_2=2,e_3= \ldots = e_t = 1$ and $\alpha = 5, e_1=\ldots = e_r = 1,e_{r+1}= \ldots = e_t =2$ are impossible. McCarthy \cite{dieciseis} proved that $3 \nmid n$ and $\delta \equiv 1 \pmod{3}$ cannot hold. He proved also in the same paper that $3 \nmid n, \gamma = 1$ and $q_1 \equiv 2 \pmod{3}$ is impossible. 

McDaniel proved \cite{dixsept} that  $\delta \equiv 1 \pmod{3}$ cannot hold. Hagis and McDaniel \cite{nueve} proved that $\beta \neq 3$ and that ($q_1=7, e_1=6$ and $7$ divides $e_2,\ldots, e_t$) is impossible. Hagis and McDaniel \cite{diez} proved that $\beta \notin \{5,12,17,24,62\}$. Hagis and McDaniel \cite{diez} proposed the still unresolved conjecture that there are no odd perfect numbers with all $e$'s equal. Cohen and Williams \cite{tres} proved that $\beta \notin \{6,8,11,14,18\}$. Thus, all cases of $\beta$ less than $20$ are resolved. They also proved that for $\gamma = 1$ one has $e_1 \notin \{5,6\}$, extending earlier results.

Recently, Evans and Pearlman \cite{six} proved the two facts: i) $\delta \equiv 32 \pmod{65}$ and ii) $7 \mid n, \delta \equiv 2 \pmod{5}$, are impossible. Recently also, Gallardo \cite{ocho} proved that $n$ cannot be a cube.

See also \cite[Chapter 1]{cuatro} for a comprehensive survey on this area.

As usual $a \mid \mid b$ means that $a$ divides $b$ (denoted also by $a \mid b$) and that $\gcd(a,b/a) = 1$. For a positive integer $n >0$, we denote by $\Phi_n(x)$ the $n$-th cyclotomic polynomial. We denote by $\sigma(n)$ the sum of all positive divisors of $n$. We denote by $\omega(n)$ the number of distinct prime factors of $n$. For a finite set $A$, we denote by $\sharp(A)$ the number of elements of $A$.

The object of this paper is to prove the following two results:
\begin{theorem}
\label{th1d2}
There is no odd perfect number of the form
\begin{equation*}
n = 5^{\alpha} M^{2 \beta}
\end{equation*}
where $M$ is square-free and $5 \nmid M$.
\end{theorem}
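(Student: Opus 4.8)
The plan is to combine the $p$-adic valuations of both sides of $\sigma(n)=2n$ at the primes $5$, $3$ and $7$, and then to invoke the cited impossibility result of Evans and Pearlman. Since $M$ is square-free, every prime $q_i\mid M$ occurs in $n$ with the same exponent $2\beta$, so in the notation of the introduction $\delta=\beta$, and $5$ is forced to be Euler's special prime (it is the unique prime dividing $n$ to an odd power), whence $\alpha\equiv 1\pmod 4$. I would write $\sigma(n)=\sigma(5^\alpha)\prod_i\sigma(q_i^{2\beta})=2\cdot 5^\alpha M^{2\beta}$ and expand each factor into cyclotomic values, $\sigma(q^{2\beta})=\prod_{d\mid 2\beta+1,\,d>1}\Phi_d(q)$ and $\sigma(5^\alpha)=\prod_{d\mid \alpha+1,\,d>1}\Phi_d(5)$.

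First I would read off the exact power of $5$. By the standard lemma on prime divisors of cyclotomic values (a prime $\ell\nmid q$ divides $\Phi_d(q)$ only when $\ell\equiv 1\pmod d$ or $\ell\mid d$), together with $2\beta+1$ being odd, one gets that $5\mid\sigma(q^{2\beta})$ forces $q\equiv 1\pmod 5$, and then $v_5(\sigma(q^{2\beta}))=v_5(2\beta+1)$ by the lifting-the-exponent formula. Since $\sigma(5^\alpha)\equiv 1\pmod 5$ contributes no factor $5$ while $5^\alpha\mid\mid n$, comparing $v_5$ on both sides yields $\alpha=s\,v_5(2\beta+1)$, where $s\ge 1$ is the number of $q_i\equiv 1\pmod 5$. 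As $\alpha\ge 1$ this forces $v_5(2\beta+1)\ge 1$, that is $\beta\equiv 2\pmod 5$.

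Next I would extract a factor $7$. Because $\alpha$ is odd, $\sigma(5^\alpha)=\sum_{j=0}^\alpha 5^j\equiv 0\pmod 3$, so $3\mid n$ and, $M$ being square-free, $3^{2\beta}\mid\mid n$. By McDaniel's theorem $\delta\equiv 1\pmod 3$ is impossible, hence $\beta\not\equiv 1\pmod 3$, so $3\nmid 2\beta+1$; the cyclotomic-divisor lemma then shows that no $\sigma(q_i^{2\beta})$ is divisible by $3$. Therefore the entire factor $3^{2\beta}$ of $\sigma(n)=2n$ comes from $\sigma(5^\alpha)$, and lifting the exponent gives $v_3(\sigma(5^\alpha))=1+v_3(\alpha+1)=2\beta$. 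In particular $v_3(\alpha+1)=2\beta-1\ge 1$, so $3\mid \alpha+1$; combined with $2\mid \alpha+1$ this gives $6\mid \alpha+1$. Hence $\Phi_6(5)=21$ occurs among the cyclotomic factors of $\sigma(5^\alpha)$, so $7\mid\sigma(5^\alpha)\mid 2n$ and therefore $7\mid n$.

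Finally, I would invoke Evans and Pearlman: since $7\mid n$ and $\delta=\beta\equiv 2\pmod 5$, their second result rules this out, giving the contradiction that proves the theorem. The delicate point, and the step I expect to be the main obstacle, is the $3$-adic bookkeeping of the third paragraph: one must be sure that \emph{every} contribution to $v_3(\sigma(n))$ other than that of $\sigma(5^\alpha)$ vanishes, which is exactly where $\beta\not\equiv 1\pmod 3$ (via McDaniel) and the precise form of the cyclotomic-divisor lemma are needed, so that the clean identity $v_3(\alpha+1)=2\beta-1$, and with it the divisibility $6\mid\alpha+1$ feeding $\Phi_6(5)=21$, can be asserted.
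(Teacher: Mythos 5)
Your proof is correct, but its endgame is genuinely different from the paper's. The two arguments share all the valuation bookkeeping: like the paper, you derive $3\mid n$ from $\sigma(5^{\alpha})\equiv 0\pmod 3$, hence $3^{2\beta}\mid\mid n$; the $5$-adic identity $\alpha=s\,v_5(2\beta+1)$, which is the paper's \eqref{3.2} with $\gamma=s$ and $5^{b}\mid\mid 2\beta+1$; McDaniel's result (Lemma \ref{L2.3}) to exclude $3\mid 2\beta+1$; and the $3$-adic identity $v_3(\alpha+1)=2\beta-1$, which is the paper's \eqref{3.3}. From there the paper argues quantitatively: since $\alpha+1\geq 3^{2\beta-1}$ while $5^{b}\leq 2\beta+1$, the number $\gamma$ of prime divisors of $n$ congruent to $1$ modulo $5$ must be at least $(3^{2\beta-1}-1)/\ln(2\beta+1)$, which violates Yamada's bound (Lemma \ref{L2.7}) unless $\beta\in\{1,2\}$, and those cases are then killed by $\beta\not\equiv 1\pmod 3$ and Kanold (Lemma \ref{L2.2}). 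You instead use \eqref{3.3} only qualitatively: $v_3(\alpha+1)\geq 1$ together with $2\mid\alpha+1$ gives $6\mid\alpha+1$, hence $21=\Phi_6(5)\mid\sigma(5^{\alpha})$, so $7\mid n$; you then conclude with Evans--Pearlman's result (ii), cited in the introduction, applied with all half-exponents equal to $\beta\equiv 2\pmod 5$, which you extracted from the $5$-adic step. This buys a shorter and more elementary argument: no Yamada, no Kanold. In fact it can be compressed further: once $3$, $5$ and $7$ all divide $n$, the paper's own Lemma \ref{L2.5} (Sylvester: no odd perfect number is divisible by $105$) gives the contradiction immediately --- exactly how the paper ends its proof of Theorem \ref{1.4} --- so Evans--Pearlman, and with it your entire $5$-adic paragraph, could be dropped. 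One small point of hygiene: the cyclotomic-divisor lemma you invoke assumes $\ell\nmid q$, so for the factor $\sigma(3^{2\beta})$ (note $3\mid M$) you must observe separately that it is $\equiv 1\pmod 3$; this is trivial, but it is needed before you can assert that all of the $3$-part of $\sigma(n)$ comes from $\sigma(5^{\alpha})$.
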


In order to present our second result, we need the following definitions. Let $\Sigma = \{p\;\text{prime}\; \colon  p \in \{2,4\} \pmod{7}$. For any prime number $x > 7$ let $T(x) = \{q\;\text{prime}\;\colon q \neq 3\;\text{and}\;q \mid \Phi_3(x)\}$. Let $p > 7$ be a prime number. We say that $p$ is \emph{good} if for some nonnegative integer $n \geq 0$ the set $S_n(p) \cap \Sigma$ is nonempty, where $S_n(p)$ is a set recursively defined by:
\begin{itemize}
\item[\rm{(a)}]
$S_0(p) = \{p\}$.
\item[\rm{(b)}]
$S_{n+1}(p) = S_n(p) \bigcup_{x \in S_n(p)} T(x)$.
\end{itemize}
Observe that $T(x)$ is not empty, since we claim that there exist no prime number $x$ such that
\begin{equation}
\label{1.3}
\Phi_3(x) = 3^m
\end{equation}
where $m$ is a positive integer. To prove the claim, reduce both sides of \eqref{1.3} modulo $3$ to get $x \equiv 1 \pmod{3}$. So $x \in \{1,4,7\} \pmod{9}$. But $x>1$ so that \eqref{1.3} implies $m > 1$. Reduce now both sides of \eqref{1.3} modulo $9$ to get the contradiction $3 \equiv \Phi_3(x) \equiv 0 \pmod{9}$. This proves the claim.

For example, $x = 31$ is good since $S_0(31) = \{31\}$,$S_1(31) = \{31,331\}$ and $331 \in S_1(31) \cap \Sigma$ since $331 \equiv 2 \pmod{7}$. A more involved example is the fact that $x = 13$ is good; indeed McDaniel's proof of Lemma \ref{L2.3} (in our special case $p = 5$) is essentially the proof that $x = 13$ is good:
\begin{center}
$S_0(13) = \{13\}, S_1(13) = \{13,61\},S_2(13)= \{13,61,97\}$,
\end{center}
\begin{center}
$S_3(13) = \{13,61,97,3169\}, S_4(13) = \{13,61,97,3169,3348577\}$,
\end{center}
\begin{center}
$S_5(13) = \{13,61,97,3169,3348577,3737657091169\}$,
\end{center}
\begin{center}
$S_6(13) \supseteq  R_6 = \{13,61,97,3169,3348577,3737657091169,181\}$,
\end{center}
\begin{center}
$S_7(13) \supseteq  R_7 = R_6 \cup \{79,139\}$,
\end{center}
and $79 \equiv 2 \pmod{7}$ so that $S_7(13) \cap \Sigma$ is a nonempty set. This proves the result.

We are now able to establish our second result.

\begin{theorem}
\label{1.4}
\begin{itemize}
\item[\rm{(a)}]
There is no odd perfect number of the form
\begin{equation*}
n = 5^{\alpha} 3^{2 b} q_1^{6k_1+2}\cdots q_t^{6k_t+2}
\end{equation*}
where at least one of the primes $5 < q_1 <\cdots < q_t$ is good.
\item[\rm{(b)}]
There is no perfect number of the form
\begin{equation*}
n = 5^{\alpha} 3^{2 b} q_1^{6k_1+2}\cdots q_t^{6k_t+2}
\end{equation*}
where $5 < q_1$ and at least one of the primes $q_1 < \cdots < q_t$ does not exceed $157$.
\item[\rm{(c)}]
\end{itemize}
Assuming Conjecture \ref{C1.6}, there is no odd perfect number of the form
\begin{equation*}
n = 5^{\alpha} 3^{2 b} q_1^{6k_1+2}\cdots q_t^{6k_t+2}
\end{equation*}
where $q_1,\ldots,q_t$ are prime numbers exceeding $5$.
\end{theorem}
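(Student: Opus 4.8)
The plan is to assume that an odd perfect number
$n=5^{\alpha}3^{2b}q_1^{6k_1+2}\cdots q_t^{6k_t+2}$ (with $5<q_1<\cdots<q_t$) exists and to reduce all three parts to a single structural property of the set $P=\{q_1,\dots,q_t\}$ of its large prime factors, after which the combinatorial definition of a good prime does the rest. The engine is the \emph{closure statement} $T(q)\subseteq P$ for every $q\in P$. Indeed, the exponent $6k+2$ of $q$ gives $3\mid(6k+3)$, so $\Phi_3(q)=q^2+q+1$ divides $\sigma(q^{6k+2})$ and hence divides $\sigma(n)=2n$; since $\Phi_3(q)$ is odd it divides $n$. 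Every prime $r\mid\Phi_3(q)$ satisfies $r=3$ or $r\equiv1\pmod3$, so $r\neq2$ and $r\neq5$ (as $5\equiv2\pmod3$); thus each $r\in T(q)=\{r\neq3:r\mid\Phi_3(q)\}$ is a prime factor of $n$ distinct from $2,3,5$, i.e. $r\in P$. A routine induction on $m$, using $S_0(q_i)=\{q_i\}\subseteq P$ and the recursion defining $S_{m+1}$, then yields $S_m(q_i)\subseteq P$ for all $m\ge0$ and all $i$.

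For part (a), suppose some $q_i$ is good. By definition $S_m(q_i)\cap\Sigma\neq\varnothing$ for some $m$, so the closure above forces a prime $q\in P$ with $q\equiv2$ or $4\pmod7$. Because $2^2+2+1=7$ and $4^2+4+1=21$, we have $7\mid\Phi_3(q)\mid n$, whence $7\mid n$. It therefore remains to prove that $P\cap\Sigma=\varnothing$, i.e. that $n$ has no prime factor congruent to $2$ or $4$ modulo $7$. This is exactly the generalization to the base prime $p=5$ of McDaniel's Lemma~\ref{L2.3}, and I would isolate it as an auxiliary lemma. Granting it, no $q_i$ can be good, which contradicts the hypothesis and proves (a).

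The remaining two parts are then formal. For (b), by (a) it suffices to verify that every prime $p$ with $5<p\le157$ is good; this is a finite computation, carried out for each such $p$ by iterating $S_{m+1}(p)=S_m(p)\cup\bigcup_{x\in S_m(p)}T(x)$ until the set meets $\Sigma$ (the worked cases $p=13$ and $p=31$ of the Introduction illustrate the procedure, and the claim that $\Phi_3(x)$ is never a power of $3$ guarantees $T(x)\neq\varnothing$, so every step is well posed). If some $q_i\le157$, that $q_i$ is then good and (a) applies. For (c), granting Conjecture~\ref{C1.6} — which I read as asserting that every prime exceeding $5$ is good — the hypothesis $q_1>5$ makes $q_1$ good, so (a) again produces the contradiction; and such an $n$ must indeed have at least one factor $q_i$, since for $t=0$ one has $\sigma(n)/n<\tfrac32\cdot\tfrac54=\tfrac{15}{8}<2$, so no number $5^{\alpha}3^{2b}$ is perfect. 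Hence no odd perfect number of the stated shape exists.

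The main obstacle is clearly the auxiliary lemma behind (a): ruling out $P\cap\Sigma\neq\varnothing$. I expect to attack it through the identity $\sigma(n)=2n$ read $7$-adically. Assume toward a contradiction that $q\in P\cap\Sigma$; since $\mathrm{ord}_7(q)=3$ we get $7\mid n$, say $7=q_\ell$ with exponent $6k_\ell+2$, so $v_7(n)=6k_\ell+2$ is \emph{even} (because $7\neq5$ lies in the square part of $n$). On the other hand $\sigma(7^{6k_\ell+2})\equiv1\pmod7$, so $v_7(\sigma(n))$ is carried entirely by the factors $\sigma(q_i^{6k_i+2})$ with $q_i\in\Sigma$, together with a possible contribution of $\sigma(5^{\alpha})$ through $\Phi_6(5)=21$. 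Reconciling the even value $v_7(n)$ with the arithmetic of $v_7(\sigma(n))$ — controlling the parity and the exact powers $v_7(\Phi_{3\cdot7^{\,j}}(q_i))$ via congruences modulo $7$ and $49$, in the spirit of McDaniel's original argument — is where the genuine difficulty lies, and it is the step I expect to occupy most of the work.
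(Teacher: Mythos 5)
Your reduction is sound and, in fact, coincides with the paper's argument: the closure property $S_m(q_i)\subseteq P$ (the paper leaves the induction implicit), the conclusion that goodness of some $q_j$ forces a prime $q\mid n$ with $q\equiv 2,4\pmod 7$, and hence $7\mid\Phi_3(q)\mid\sigma(q^{6k+2})\mid 2n$ with $\Phi_3(q)$ odd, so $7\mid n$. The genuine gap is what you do next. Having reached $7\mid n$, you declare that ``it remains to prove that $P\cap\Sigma=\varnothing$,'' isolate this as an unproven auxiliary lemma, and sketch a McDaniel-style $7$-adic valuation analysis of $\sigma(n)=2n$ that you yourself describe as the main outstanding difficulty. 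But $7\mid n$ is already the contradiction: by hypothesis $n$ is odd, perfect, and divisible by $3$ and $5$, so $7\mid n$ gives $105=3\cdot 5\cdot 7\mid n$, which is impossible by Sylvester's classical result (Lemma \ref{L2.5}). This is precisely why the theorem's hypotheses put the factors $5^{\alpha}3^{2b}$ into the form of $n$. Your ``auxiliary lemma'' therefore has a one-line proof, and the hard valuation argument you propose — controlling $v_7(\Phi_{3\cdot 7^j}(q_i))$ modulo $49$, parity of $v_7(n)$, etc. — is never carried out, so as written part (a), and with it the whole theorem, remains unproven in your text.

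Two smaller points. First, in parts (b) and (c) you should treat the prime $7$ separately: goodness is only defined for primes $p>7$ (and Conjecture \ref{C1.6} is stated for $p>7$), so if the small prime factor in question is $7$ itself, part (a) does not apply; instead $7\mid n$ kills $n$ directly via Lemma \ref{L2.5}, the same one-line argument as above. Second, your side remarks are fine: the verification that primes $r\mid\Phi_3(q)$ satisfy $r=3$ or $r\equiv 1\pmod 3$ (hence $r\notin\{2,5\}$) correctly replaces the paper's direct check that $\Phi_3(y)\not\equiv 0\pmod 5$, and your disposal of the $t=0$ case in (c) by the abundancy bound $\sigma(n)/n<\tfrac{3}{2}\cdot\tfrac{5}{4}<2$ (or by Lemma \ref{L2.1}) is correct, if not strictly needed.
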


Some pairs $(\alpha,b)$ for which the condition on part (a) of Theorem \ref{1.4} holds are described in Proposition \ref{P4.1}.

\begin{proposition}
\label{P1.5}
All prime numbers $p > 7$ and less than $160$ are good.
\end{proposition}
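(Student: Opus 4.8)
The proof is a finite verification, organized around one elementary propagation principle. First I would record the two facts that drive everything. A prime $p$ lies in $\Sigma$, and is therefore \emph{good} already at level $0$, exactly when $p \equiv 2$ or $p \equiv 4 \pmod 7$; equivalently, since $2$ and $4$ are precisely the roots of $\Phi_3(x) \equiv 0 \pmod 7$, exactly when $7 \mid \Phi_3(p)$. Thus the primes in $(7,160)$ congruent to $2$ or $4$ modulo $7$ need no further argument, and since $p > 7$ forces $p \not\equiv 0$, only those with $p \equiv 1,3,5,6 \pmod 7$ remain to be treated.

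The second, and central, fact is a monotonicity lemma: if $q$ is a good prime and $q \in T(p)$ for some prime $p > 7$, then $p$ is good. To see this I would prove $S_n(q) \subseteq S_{n+1}(p)$ for every $n \geq 0$ by induction. The base case holds since $S_0(q) = \{q\} \subseteq T(p) \subseteq S_1(p)$, and if $S_n(q) \subseteq S_{n+1}(p)$ then
\begin{equation*}
S_{n+1}(q) = S_n(q) \cup \bigcup_{x \in S_n(q)} T(x) \subseteq S_{n+1}(p) \cup \bigcup_{x \in S_{n+1}(p)} T(x) = S_{n+2}(p).
\end{equation*}
Consequently, if $S_m(q) \cap \Sigma \neq \emptyset$ then $S_{m+1}(p) \cap \Sigma \neq \emptyset$, so $p$ is good. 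In words: goodness propagates backward along the factor trees generated by $\Phi_3$.

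With these in hand, the plan is to maintain a growing set $G$ of primes already certified good, initialised with the members of $\Sigma$, and to process each prime $p \in (7,160)$ with $p \not\equiv 2,4 \pmod 7$ in turn. For such a $p$ I would compute $\Phi_3(p) = p^2 + p + 1$ (a number below $160^2 + 160 + 1$, hence trivially factored), discard the possible factor $3$, and search its prime divisors for a member of $G$; if one is found, the monotonicity lemma places $p$ in $G$. When no divisor of $\Phi_3(p)$ is yet known to be good, I would run a forward search, following prime factors of successive $\Phi_3$-values until a prime of $\Sigma$ is reached, exactly as the introduction does for $31$ and for $13$ (where a chain of $\Phi_3$-factorisations reaches $181$, and $\Phi_3(181) = 3 \cdot 79 \cdot 139$ exhibits $79 \equiv 2 \pmod 7$). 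Each such chain simultaneously certifies every prime along it, so these entries enlarge $G$ and shortcut the remaining cases.

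The main obstacle is that $T(x)$ can contain primes far larger than $x$, whose own values $x^2+x+1$ grow quadratically, so the forward search may encounter integers of many digits (the $13$-chain already reaches $3737657091169$). I do not expect to avoid these explicit numbers entirely; instead I would keep the computation tractable by exploiting that only \emph{one} good prime factor is ever needed, so a partial factorisation—trial division far enough to expose a small prime divisor leading into $G$, or the extraction of a single good factor—usually suffices without fully splitting the large values, and by reusing the primes certified by earlier chains. Carrying this out for each of the finitely many primes below $160$ completes the verification; the only genuinely delicate points are ensuring that each forward search terminates in $\Sigma$ after a manageable number of levels and that the intermediate factorisations are correctly certified, which at these sizes is routine.
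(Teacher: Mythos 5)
Your proposal is correct, and it coincides with the paper's approach: the paper's entire proof of Proposition \ref{P1.5} is the single sentence ``Follows from some computer calculations,'' i.e.\ precisely the kind of finite verification you describe. In fact your write-up supplies more than the paper does --- the observation that $p \in \Sigma$ iff $7 \mid \Phi_3(p)$, and the monotonicity lemma $S_n(q) \subseteq S_{n+1}(p)$ for $q \in T(p)$ (whose induction is sound), which rigorously justifies reusing already-certified primes during the search.
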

\begin{proof}
Follows from some computer calculations.
\end{proof}

\begin{conjecture}
\label{C1.6}
All prime numbers $p > 7$ are good.
\end{conjecture}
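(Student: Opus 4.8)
The plan is to read the definition of \emph{good} as a reachability question for the deterministic branching process $x\mapsto T(x)=\{q\ \text{prime}:q\neq 3,\ q\mid\Phi_3(x)\}$ and to show that the orbit $S_\infty(p)=\bigcup_{n}S_n(p)$ of any seed $p>7$ cannot avoid the target set $\Sigma$ forever. First I would record the two elementary facts that govern every vertex produced after the seed. Since a prime $q\neq 3$ divides $\Phi_3(x)=x^2+x+1$ exactly when $x$ has multiplicative order $3$ modulo $q$, one has $3\mid q-1$, so every newly produced prime satisfies $q\equiv 1\pmod 3$; hence the iteration lives among primes $\equiv 1\pmod 3$, and producing a $\Sigma$-element means reaching the residue classes $4$ or $16\pmod{21}$. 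Second, $7\mid\Phi_3(x)$ precisely when $x\equiv 2,4\pmod 7$, i.e.\ when $x$ is itself in $\Sigma$; so the prime $7$ enters the orbit only as a \emph{symptom} of success, never as a route to it. Thus the conjecture is equivalent to: there is no set $S$ of primes $\equiv 1\pmod 3$ containing $p$, closed under taking non-$3$ prime factors of $\Phi_3$, with $S\cap\Sigma=\emptyset$.

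I would dispose of the finite case first, by a descent on the largest element $M\in S$. Since $M^2+M+1\equiv 1\pmod M$ we have $M\nmid\Phi_3(M)$, so every prime factor of $\Phi_3(M)$ other than $3$ lies in $S$ and is strictly smaller than $M$. Moreover the argument already used to rule out $\Phi_3(x)=3^m$ shows $\Phi_3(x)\equiv 3\pmod 9$ whenever $3\mid\Phi_3(x)$, so $v_3(\Phi_3(M))\le 1$; combined with $M^2<\Phi_3(M)<(M+1)^2$ this forces $\Phi_3(M)$ (or $\Phi_3(M)/3$) to be a product of \emph{at least two} primes, each less than $M$. One then hopes to show that such downward closure is impossible: chains like $7\to 19\to 127\to\cdots$, where $\Phi_3$ repeatedly splits off a prime exceeding the previous one, should force $S_\infty(p)$ to be infinite. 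The clean lemma to aim for is that $\Phi_3(M)$ always has a prime factor greater than $M$; this fails for some composite inputs (for instance $\Phi_3(18)=7^3$), so the honest version is that no finite $\Phi_3$-closed set of primes can exist, which I expect to follow from a growth estimate on the largest prime factor of $M^2+M+1$ together with the exclusion of cycles.

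The genuinely hard case is an infinite $\Sigma$-avoiding closed set, and this is where I expect the main obstacle to lie. The natural route is equidistribution: the distribution of the prime factors of $\Phi_3(q)$ among the residue classes modulo $7$ is governed, through Chebotarev's theorem in $\Q(\zeta_3,\zeta_7)$, by Frobenius data, and one would like to conclude that an orbit cannot confine all its primes to the classes $1,3,5,6\pmod 7$ (together with $7$ itself). The difficulty is that the residue $\Phi_3(q)\bmod 7$ pins down only the \emph{product} of the prime factors of $\Phi_3(q)$, never an individual factor: when $\Phi_3(q)\equiv 3\pmod 7$ one learns merely that an odd number of non-residue factors occur, which does not force any single factor into $\{2,4\}\pmod 7$. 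Consequently there is no monotone invariant and no forced descent toward $\Sigma$, and a purely congruence-theoretic argument cannot close the gap; making an equidistribution statement rigorous for one deterministic orbit, rather than on average over all primes, is exactly what keeps the assertion at the level of a conjecture. The computation behind Proposition~\ref{P1.5} is the evidence that no such trap occurs below $160$.
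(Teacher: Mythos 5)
The statement you were asked to prove is Conjecture \ref{C1.6}: the paper itself offers no proof of it, only the computational evidence of Proposition \ref{P1.5} (all primes $p$ with $7 < p < 160$ are good), so there is no proof in the paper to compare yours against. Your proposal, to its credit, is honest that it is not a proof either: both halves of your case analysis are left open, and your closing sentence concedes exactly the gap that keeps the statement a conjecture. Your elementary observations are correct and worth keeping: every prime $q \neq 3$ dividing $\Phi_3(x)$ satisfies $q \equiv 1 \pmod{3}$ (since $x$ then has multiplicative order $3$ modulo $q$); $7 \mid \Phi_3(x)$ precisely when $x \equiv 2,4 \pmod{7}$, so $7$ can enter the orbit only after a $\Sigma$-element has already appeared; and $v_3(\Phi_3(x)) \leq 1$, by the same modulo-$9$ computation the paper uses to show $T(x) \neq \emptyset$.

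However, the one concrete lemma you propose for the finite case is false, and not only for composite arguments as you note. For the prime $M = 67$ one has $\Phi_3(67) = 4557 = 3 \cdot 7^2 \cdot 31$, whose prime factors $3$, $7$, $31$ are all smaller than $67$; so the ``clean lemma'' that $\Phi_3(M)$ has a prime factor exceeding $M$ fails already for prime arguments. (This particular $M$ is not a counterexample to the conjecture, since $67 \equiv 4 \pmod{7}$ lies in $\Sigma$, but it does destroy the proposed descent.) The fallback statement --- that no finite $\Phi_3$-closed, $\Sigma$-avoiding set of primes can exist --- is only ``hoped'', and the infinite case you explicitly abandon: as you correctly diagnose, the residue of $\Phi_3(q)$ modulo $7$ constrains only the product of the prime factors, never an individual one, so no congruence invariant forces the orbit into $\Sigma$. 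Your text is thus best read as a useful reformulation (goodness as a reachability problem for a $\Phi_3$-closure) together with an accurate account of why the problem is hard, not as a proof; in that respect it is consistent with the paper, which deliberately leaves the statement as a conjecture.
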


\section{Some tools}
\label{toolsBM}

Some classical results follow.
\begin{lemma}
\label{L2.1} \cite{BIB21}
There is no odd perfect number with less than $3$ distinct prime divisors.
\end{lemma}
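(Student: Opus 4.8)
The plan is to work with the abundancy index $\sigma(n)/n$ and to exploit the fact that perfection is exactly the assertion $\sigma(n) = 2n$, i.e. $\sigma(n)/n = 2$. Since $\sigma$ is multiplicative and $n$ is odd, I would write $n = p_1^{a_1}\cdots p_\omega^{a_\omega}$ with distinct odd primes $p_i \geq 3$, and reduce everything to the elementary estimate $\sigma(p^a)/p^a = \sum_{i=0}^{a} p^{-i} < \sum_{i=0}^{\infty} p^{-i} = p/(p-1)$, valid for every prime power. By multiplicativity this yields the product bound $\sigma(n)/n < \prod_{i=1}^{\omega} p_i/(p_i-1)$, and the whole proof consists in showing this product stays strictly below $2$ when $\omega \leq 2$.

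First I would dispose of the trivial case $\omega(n) = 0$ (then $n = 1$, which is not perfect) and of $\omega(n) = 1$. In the latter case $n = p^a$ with $p \geq 3$, so $\sigma(n)/n < p/(p-1) \leq 3/2 < 2$, whence $\sigma(n) < 2n$ and $n$ cannot be perfect. Next, for $\omega(n) = 2$ I would use that the two distinct odd primes dividing $n$ are at least $3$ and $5$. Since the map $x \mapsto x/(x-1)$ is decreasing, the product bound is maximized by the smallest admissible primes, giving $\sigma(n)/n < \frac{3}{2}\cdot\frac{5}{4} = \frac{15}{8} < 2$. Again $\sigma(n) < 2n$, so no such $n$ is perfect. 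Combining the two cases shows that an odd perfect number must satisfy $\omega(n) \geq 3$, which is the stated conclusion.

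The argument is entirely elementary and there is no serious obstacle; the only point requiring a little care is the passage from the per-prime bound to the product bound, that is, checking that $\prod_i p_i/(p_i-1)$ over admissible odd primes remains strictly below $2$ for $\omega \leq 2$. This rests on the monotonicity of $x/(x-1)$ together with the numerical inequality $\frac{15}{8} < 2$, so the proof is complete once $3$ and $5$ have been identified as the extremal configuration. I should note that the oddness is essential: for $\omega = 2$ the even prime $2$ would contribute a factor $2/(2-1) = 2$, which is precisely what allows the Euclid--Euler even perfect numbers $2^{p-1}(2^p-1)$ to exist, so the hypothesis that all prime factors are odd cannot be dropped.
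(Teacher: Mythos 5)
Your proof is correct and complete, but note that the paper does not actually prove this lemma at all: it is quoted as a classical result and attributed to Sylvester \cite{BIB21}, so there is no internal argument to compare yours against. What you have written is the standard elementary abundancy-index proof, and it is sound: from $\sigma(p^a)/p^a = \sum_{i=0}^{a} p^{-i} < p/(p-1)$ (strict, since the geometric sum is finite) and multiplicativity of $\sigma$, you get $\sigma(n)/n < \prod_{i} p_i/(p_i-1)$; since $x \mapsto x/(x-1)$ is decreasing, the extremal configuration for two distinct odd primes is $\{3,5\}$, giving $\sigma(n)/n < \frac{3}{2}\cdot\frac{5}{4} = \frac{15}{8} < 2$, so $\sigma(n) < 2n$ and $n$ cannot be perfect; the cases $\omega(n) \in \{0,1\}$ are handled the same way. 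Your closing observation that oddness is essential (the factor $2/(2-1)=2$ contributed by the prime $2$ is exactly what permits the Euclid--Euler even perfect numbers) correctly identifies where the hypothesis enters. For context, Sylvester's cited papers establish more than the statement used here (he showed an odd perfect number needs at least five distinct prime divisors, by arguments refining this same product bound), but the weaker bound the paper invokes is precisely what your self-contained estimate delivers, so your proof would serve as a drop-in replacement for the citation.
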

\begin{lemma}
\label{L2.2} \cite{once}
There is no odd perfect number $n = p^{\alpha} c^2$ where $c$ is square-free.
\end{lemma}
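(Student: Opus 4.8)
The plan is to argue by contradiction, exploiting the cyclotomic shape of the contributions coming from the square part. Suppose $n=p^{\alpha}c^{2}$ is an odd perfect number with $c=q_{1}\cdots q_{t}$ square-free, the $q_{i}$ being distinct odd primes different from $p$; by Euler we have $p\equiv\alpha\equiv1\pmod4$, and by Lemma~\ref{L2.1} we have $\omega(n)=t+1\geq3$, so $t\geq2$. Multiplicativity of $\sigma$ together with $\sigma(n)=2n$ gives
\[
\sigma(p^{\alpha})\prod_{i=1}^{t}\sigma(q_{i}^{2})=2\,p^{\alpha}\prod_{i=1}^{t}q_{i}^{2},
\]
and the first step is to rewrite each square-part factor as $\sigma(q_{i}^{2})=1+q_{i}+q_{i}^{2}=\Phi_{3}(q_{i})$.

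Next I would record the two elementary facts that drive the argument. First, a parity count: each $\Phi_{3}(q_{i})$ is odd, while $\sigma(p^{\alpha})=1+p+\cdots+p^{\alpha}$ is a sum of $\alpha+1\equiv2\pmod4$ terms each congruent to $1$ modulo $4$, so $\sigma(p^{\alpha})\equiv2\pmod4$; hence the unique factor $2$ on the right is carried entirely by $\sigma(p^{\alpha})$, and $v:=\sigma(p^{\alpha})/2$ is odd. Second, a constraint on prime divisors: if a prime $r$ divides $\Phi_{3}(q_{i})$ then the order of $q_{i}$ modulo $r$ divides $3$, whence either $r=3$ or $r\equiv1\pmod3$. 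Since $\sigma(p^{\alpha})\equiv1\pmod p$, the prime $p$ cannot divide $\sigma(p^{\alpha})$, so $p$ must divide some $\Phi_{3}(q_{i})$; as $p\geq5$ this forces $p\equiv1\pmod3$, i.e.\ $p\equiv1\pmod{12}$.

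With these in hand, the core of the proof is a descent on the prime squares. Reducing the displayed identity modulo $4$ and using $q_{i}^{2}\equiv p^{\alpha}\equiv1\pmod4$ shows $v\prod_{i}\Phi_{3}(q_{i})\equiv1\pmod4$, where $\Phi_{3}(q_{i})\equiv3\pmod4$ exactly when $q_{i}\equiv1\pmod4$; this ties the parity of the number of such primes to the residue of $v$. Simultaneously, for the largest prime $q_{t}$ one has $q_{t}\nmid\Phi_{3}(q_{t})$ although $q_{t}^{2}\mid\mid n$, so the whole power $q_{t}^{2}$ must be produced by $\sigma(p^{\alpha})$ together with the strictly smaller factors $\Phi_{3}(q_{i})<q_{t}^{2}$ for $i<t$; tracking where each such factor of $q_{t}$ (and then of $q_{t-1}$, and so on) can sit, and feeding the resulting relations back into the congruence count, is what yields the contradiction.

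I expect this last bookkeeping to be the main obstacle: the simple congruences modulo $4$, modulo $3$, and the $\Phi_{3}$ prime-divisor structure are individually easy but mutually consistent, so none of them alone closes the argument. The real work lies in showing that no distribution of the prime powers $q_{i}^{2}$ among the cyclotomic values $\Phi_{3}(q_{j})$ and $\sigma(p^{\alpha})$ is compatible with all of them at once; this is precisely the Steuerwald--Kanold analysis, and it is here, rather than in the set-up, that essentially all of the difficulty is concentrated.
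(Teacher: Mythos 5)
First, a structural point: the paper does not prove Lemma \ref{L2.2} at all. It is imported as a classical result of Kanold \cite{once} (the statement goes back to Steuerwald \cite{dixneuf}) and is then used as a black box at the end of the proof of Theorem \ref{th1d2}. So your attempt has to stand as a self-contained proof, and it does not: no contradiction is ever derived. Everything you establish is correct but elementary --- $\sigma(q_i^2)=\Phi_3(q_i)$, the parity $\sigma(p^{\alpha})\equiv 2\pmod 4$, the fact that any prime divisor of $\Phi_3(q_i)$ is $3$ or $\equiv 1\pmod 3$, hence $p\equiv 1\pmod{12}$, and the mod-$4$ tally --- and then your last two paragraphs openly delegate the conclusion to ``the Steuerwald--Kanold analysis.'' That analysis is not a bookkeeping step one may leave to the reader: it \emph{is} the theorem. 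A proof whose final step is ``now invoke the known proof of this statement'' proves nothing.

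Second, the missing mechanism is quite specific, and it is not the mod-$4$/largest-prime descent you outline. The known arguments split on whether $3\mid n$. If $3\mid n$, then $3\neq p$ (since $p\equiv 1\pmod 4$), so $3$ is one of the $q_i$ and $3^2\,\vert\vert\, n$; then $13=\sigma(3^2)$ divides $2n$, and iterating $\sigma$ produces a chain of primes dividing $n$, namely $13, 61, 97, 3169,\ldots$ --- exactly the chain the paper displays when verifying that $13$ is good. The punchline is a count of factors of $3$: for $q\equiv 1\pmod 3$ one has $\Phi_3(q)\equiv 3\pmod 9$ (the very computation the paper makes just after \eqref{1.3}), so each such $q_i$ contributes exactly one factor $3$ to $\sigma(n)=2n$, while $3^2\,\vert\vert\,2n$; this caps how many chain primes can occur among the $q_i$, and a case analysis (including the possibility that one chain prime equals $p$) closes the argument. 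If instead $3\nmid n$, then every $q_i\equiv 2\pmod 3$, so every prime divisor of $\Phi_3(q_i)$ is $\equiv 1\pmod 3$ and hence coprime to every $q_j$, forcing each $q_i^2+q_i+1$ to be a power of $p$ --- for all $t\geq 2$ indices $i$ simultaneously --- which is then ruled out. None of this counting appears in your proposal; the mod-$4$ congruence tally and the size estimate $\Phi_3(q_i)<q_t^2$ do not by themselves converge to a contradiction, so the gap is not a detail but the entire core of the proof.
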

\begin{lemma}
\label{L2.3} \cite{dixsept}
There is no odd perfect number $n = p^{\alpha} q_1^{2e_1}\cdots q_t^{2e_t}$  with $e_1 \equiv e_2 \equiv \cdots \equiv e_t \equiv 1 \pmod{3}$.
\end{lemma}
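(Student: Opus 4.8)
The plan is to combine perfection with the cyclotomic factorization of $\sigma$. Writing $\sigma(q^{2e})=\prod_{d\mid 2e+1,\,d>1}\Phi_d(q)$ and using that $e\equiv 1\pmod 3$ forces $3\mid 2e+1$, the factor $\Phi_3(q)=q^2+q+1$ always occurs. Since $\sigma(n)=2n$ and $\Phi_3(q)$ is odd, I would first record the key divisibility: for every prime $q\mid n$ other than the Euler prime $p$ one has $\Phi_3(q)\mid n$. This is exactly the step that activates the operator $T$ of the introduction, because the prime divisors of $\Phi_3(q)$ different from $3$ are themselves prime divisors of $n$.

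Next I would force $3\mid n$. A short order computation shows that every prime divisor $r\neq 3$ of $\Phi_3(q)$ satisfies $r\equiv 1\pmod 3$, whereas $3\mid\Phi_3(q)$ exactly when $q\equiv 1\pmod 3$. If some $q_i\equiv 1\pmod 3$, then $3\mid n$ at once. Otherwise all $q_i\equiv 2\pmod 3$, so every prime divisor of each $\Phi_3(q_i)$ that divides $n$ is $\equiv 1\pmod 3$; the only available candidate is $p$, which forces each $q_i^2+q_i+1$ to be a power of the single prime $p$. As Lemma \ref{L2.1} guarantees at least two distinct $q_i$, the strict monotonicity of $\Phi_3$ (for equal exponents) together with a Nagell--Ljunggren analysis of $x^2+x+1=p^{a}$ (for a higher exponent) eliminates this case. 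Hence $3\mid n$, and since $3\not\equiv 1\pmod 4$ the prime $3$ is not $p$, so it carries an exponent of the special shape and $\Phi_3(3)=13\mid n$.

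From here the argument becomes the descent recorded by the sets $S_n$. Beginning at $13$, the engine repeatedly adjoins the primes of $T(x)$ to the prime divisors of $n$; the claim proved in the introduction, that no prime $x$ satisfies $\Phi_3(x)=3^m$, ensures the process never stalls for want of a prime $\neq 3$. In the case $p=5$ one also notes that $5\nmid\Phi_3(x)$ for every $x$, so the Euler prime can never interrupt the descent. I would then invoke the explicit verification that $13$ is \emph{good}, namely the finite chain $13\to 61,97\to\cdots\to 181\to 79$ with $79\equiv 2\pmod 7$, to produce a prime divisor $s$ of $n$ with $s\equiv 2$ or $4\pmod 7$; then $7\mid\Phi_3(s)\mid n$, and in particular $7\mid n$.

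The contradiction I am aiming for is that $n$ would be forced to carry infinitely many prime divisors: feeding each newly found prime back through the engine (for instance $7\to 19$ via $\Phi_3(7)=57=3\cdot 19$, then $19\to 127$ via $\Phi_3(19)=3\cdot 127$, and so on) perpetually yields further prime divisors of $n$. The heuristic reason to expect unboundedness is that $\Phi_3(q)/3>q$, so one hopes always to extract a prime factor exceeding $q$; this is where I expect the real obstacle to lie, since the extraction can fail precisely when $\Phi_3(q)/3$ is smooth, and that failure is exactly why the descent does not obviously terminate. This smoothness obstruction is the heart of the matter, and it is precisely what the notion of a good prime isolates and what Conjecture \ref{C1.6} asserts never traps the descent. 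For the present lemma the obstruction is removed in a finite, checkable way: the descent is anchored at the fixed prime $13$, which is verified to be good by the explicit chain reaching $79\equiv 2\pmod 7$, and for $p=5$ the Euler prime cannot interfere because $5\nmid\Phi_3(x)$ for all $x$; reaching $\Sigma$ is therefore a terminating certificate rather than an open-ended search.
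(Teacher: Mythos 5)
The paper itself does not prove Lemma \ref{L2.3}: it is quoted from McDaniel's 1970 article, so there is no internal proof to compare against; the closest internal material is the proof of Theorem \ref{1.4}(a) (the sets $S_n$, the goodness of $13$, and Lemma \ref{L2.5}), and that is essentially what you have reconstructed. Your first half is sound: every $\Phi_3(q_i)$ divides $n$; if some $q_i\equiv 0,1\pmod 3$ then $3\mid n$; otherwise all $q_i\equiv 2\pmod 3$, the prime divisors of $\Phi_3(q_i)$ are $\equiv 1\pmod 3$ and hence all equal to $p$, and Lemma \ref{L2.1} plus Nagell's theorem on $x^2+x+1=y^m$ ($m\ge 2$, only $x=18$) kills this case. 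This re-proves McCarthy's theorem quoted in the introduction, and then $3=q_j$ for some $j$ (as $3\not\equiv 1\pmod 4$) gives $13=\Phi_3(3)\mid n$.

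The gaps are in the second half, and they are genuine. First, the lemma is stated for an \emph{arbitrary} Euler prime $p\equiv 1\pmod 4$, but your descent survives the Euler prime only when $p=5$. The chain elements $13,61,97,3169,3348577,3737657091169,181$ are all $\equiv 1\pmod 4$, so any one of them is a legitimate candidate for $p$; if, say, $p=13$, then from $13\mid n$ you cannot conclude $\Phi_3(13)\mid\sigma(13^{\alpha})$, since nothing forces $3\mid\alpha+1$, and the descent never starts. Your observation that $5\nmid\Phi_3(x)$ disposes of this interference only for $p=5$; for general $p$ your proposal is silent, and this is precisely the extra work McDaniel's general argument has to do. Second, your terminal contradiction is not established even for $p=5$. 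The ``infinitely many prime divisors'' route is, as you yourself concede, heuristic -- the smoothness obstruction is real (e.g.\ $\Phi_3(79)=3\cdot 7^2\cdot 43$ yields only smaller primes, so the chain need not grow) -- and ``reaching $\Sigma$'' is not by itself a contradiction: $7\mid n$ clashes with perfection only through Lemma \ref{L2.5} (Sylvester: no odd perfect number is divisible by $105$), which requires $3$, $5$ and $7$ to divide $n$ simultaneously, i.e.\ again $p=5$. With that invocation added, what you have is a correct proof of the special case $p=5$ (essentially Theorem \ref{1.4}(a) combined with McCarthy's theorem), not of McDaniel's lemma as stated.
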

\begin{lemma}
\label{L2.4} \cite{BIB18}
If $n = p^{\alpha} 3^{2 \beta} d^{2 \beta}$ with square-free $d$ and with $\gcd(3,d) =1 = \gcd(p,d)$ is an odd perfect number then $\sigma(p^{\alpha}) \equiv 0 \pmod{3^{2 \beta}}$.
\end{lemma}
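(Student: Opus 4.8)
The plan is to read off the exact power of $3$ on each side of the perfection identity $\sigma(n) = 2n$ and compare. Since the factorization $n = p^{\alpha} 3^{2\beta} d^{2\beta}$ has $3$ appearing only in the middle factor we have $p \neq 3$, and because $\gcd(3,d)=1$ the prime $3$ divides $n$ to the exact power $3^{2\beta}$; as $2$ is a unit modulo $3$, writing $v_3$ for the $3$-adic valuation we get $v_3(2n) = 2\beta$. Writing $d = q_1\cdots q_t$ with distinct primes $q_i \neq 3$, multiplicativity of $\sigma$ gives
\[
\sigma(n) = \sigma(p^{\alpha})\,\sigma(3^{2\beta}) \prod_{i=1}^{t} \sigma(q_i^{2\beta}),
\]
so that $v_3(\sigma(n)) = v_3(\sigma(p^{\alpha})) + v_3(\sigma(3^{2\beta})) + \sum_i v_3(\sigma(q_i^{2\beta}))$. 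The whole result will follow once I show that every factor other than $\sigma(p^{\alpha})$ is coprime to $3$: then $2\beta = v_3(2n) = v_3(\sigma(n)) = v_3(\sigma(p^{\alpha}))$, which is exactly $\sigma(p^{\alpha}) \equiv 0 \pmod{3^{2\beta}}$.

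First I would dispose of $\sigma(3^{2\beta}) = 1 + 3 + \cdots + 3^{2\beta} \equiv 1 \pmod 3$, which is immediate. Next, for a prime $q \neq 3$ I would reduce $\sigma(q^{2\beta}) = \sum_{j=0}^{2\beta} q^j$ modulo $3$ according to the residue of $q$. If $q \equiv 2 \equiv -1 \pmod 3$ the terms alternate $1,-1,1,\dots$ and, there being an odd number $2\beta+1$ of them, they sum to $1 \pmod 3$, so $\sigma(q^{2\beta})$ is automatically coprime to $3$. If $q \equiv 1 \pmod 3$ the sum reduces to $2\beta + 1 \pmod 3$, which is divisible by $3$ precisely when $\beta \equiv 1 \pmod 3$.

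This last case is the only real obstacle, and it is exactly where Lemma \ref{L2.3} enters. In the present situation every prime of $n$ other than the special prime $p$ carries the exponent $2\beta$, so that all the half-exponents equal $\beta$; that is, $n = p^{\alpha} 3^{2\beta} q_1^{2\beta}\cdots q_t^{2\beta}$ is of the form treated by Lemma \ref{L2.3} with $e_1 = \cdots = e_t = \beta$. Hence if $\beta \equiv 1 \pmod 3$ then $n$ would be an odd perfect number with all $e_i \equiv 1 \pmod 3$, contradicting Lemma \ref{L2.3}; so in any non-vacuous instance $\beta \not\equiv 1 \pmod 3$, whence $2\beta + 1$ is a unit modulo $3$ and $3 \nmid \sigma(q_i^{2\beta})$ for the primes $q_i \equiv 1 \pmod 3$ as well. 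Combining the three coprimality facts with the valuation comparison yields $v_3(\sigma(p^{\alpha})) = 2\beta$, completing the argument. The point needing the most care, and the step I expect to be the crux, is the bookkeeping that all non-special exponents are genuinely equal to $\beta$ so that Lemma \ref{L2.3} applies verbatim.
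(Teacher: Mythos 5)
Your proof is correct, but note that the paper never proves this lemma at all: it is imported as Starni's theorem \cite{BIB18} and stated without proof. What you have in effect done is reconstruct, for general $p$, the alternative in-text derivation that the paper sketches inside the proof of Theorem \ref{th1d2}, where the fact $3^{2\beta} \vert\vert \sigma(5^{\alpha})$ is obtained by invoking Lemma \ref{L2.3} to rule out $3 \mid 2\beta+1$ and then Pomerance's Lemma \ref{L2.6} to conclude $3 \nmid \sigma(Q^{2\beta})$, with the remark that the same fact can be read off from Lemma \ref{L2.4}. Your route differs only in that you replace the appeal to Lemma \ref{L2.6} by a direct reduction of the geometric sums modulo $3$ (cases $q \equiv \pm 1 \pmod{3}$), which makes the argument more elementary and self-contained; Lemma \ref{L2.6} would deliver the same coprimality statements in one line. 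Two points in your favour: your valuation bookkeeping actually yields the exact divisibility $3^{2\beta} \vert\vert \sigma(p^{\alpha})$, slightly stronger than the stated congruence and precisely the form used in Section 3; and there is no circularity in using Lemma \ref{L2.3}, since that is McDaniel's independent result, of which the paper's Lemma \ref{L2.4} is not an ingredient. One step you should make explicit rather than read off from the notation: $p \neq 3$ is not a purely typographical consequence of how the factorization is displayed; it follows from Euler's form of an odd perfect number, because $3$ and every prime dividing $d$ occur in $n$ to even powers, so $p$ must be the Euler prime and hence $p \equiv 1 \pmod{4}$, in particular $p \neq 3$.
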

\begin{lemma}
\label{L2.5} \cite[footnote on page 590]{BIB20}
There is no odd perfect number $n$ divisible by $105 = 2 \cdot 5 \cdot 7$.
\end{lemma}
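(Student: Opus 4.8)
The plan is to derive the contradiction directly from the perfection identity $\sigma(n) = 2n$ by bounding the abundancy index $\sigma(n)/n$ from below and showing it must exceed $2$. Since $n$ is odd, divisibility by $105$ means divisibility by $3 \cdot 5 \cdot 7$ (the factorization relevant here is $105 = 3\cdot 5\cdot 7$, forced by $2 \nmid n$), so the three primes $3$, $5$ and $7$ all divide $n$. Writing $n = \prod_{p \mid n} p^{a_p}$ and using multiplicativity of $\sigma$, I would exploit the fact that each local factor $\sigma(p^{a})/p^{a} = 1 + p^{-1} + \cdots + p^{-a}$ is strictly increasing in $a$ and always exceeds $1$. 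Hence a lower bound for $\sigma(n)/n$ is obtained by discarding the factors at primes outside $\{3,5,7\}$ and replacing each of the exponents $a_3, a_5, a_7$ by the smallest value it is permitted to take.

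To pin down those minimal exponents I would invoke Euler's form $n = p^{4k+1}c^2$ recalled in the introduction: exactly one prime divisor of $n$, the special prime $p \equiv 1 \pmod 4$, carries an odd exponent, while every other prime carries an even exponent. Since $3 \equiv 3 \pmod 4$ and $7 \equiv 3 \pmod 4$, neither can be the special prime, so both occur in $n$ to an even exponent, hence at least $2$. The prime $5 \equiv 1 \pmod 4$ may or may not be the special prime, but in either case its exponent is at least $1$. Combining monotonicity with the fact that the discarded factors are each $\geq 1$, this yields
\[
\frac{\sigma(n)}{n} \;\geq\; \frac{\sigma(3^{2})}{3^{2}}\cdot\frac{\sigma(5)}{5}\cdot\frac{\sigma(7^{2})}{7^{2}} \;=\; \frac{13}{9}\cdot\frac{6}{5}\cdot\frac{57}{49}.
\]

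The final step is the purely numerical check that this lower bound already overshoots $2$. One computes
\[
\frac{13}{9}\cdot\frac{6}{5}\cdot\frac{57}{49} \;=\; \frac{4446}{2205} \;=\; 2 + \frac{4}{245} \;>\; 2,
\]
so that $\sigma(n)/n > 2$, which is incompatible with $\sigma(n) = 2n$. This contradiction shows no such odd perfect number exists.

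I do not expect a genuine obstacle, as the argument is elementary and self-contained once Euler's form is available. The only points requiring care are the correct reading of $105$ as $3\cdot 5\cdot 7$ (consistent with $n$ odd), the justification that $3$ and $7$ must appear to even exponents — which is exactly where the parity constraint of Euler's form enters and is what makes $\sigma(3^2)/3^2$ and $\sigma(7^2)/7^2$, rather than the smaller $\sigma(3)/3$ and $\sigma(7)/7$, the right quantities — and the verification that the three minimal local factors already exceed $2$, leaving no room for the many further prime divisors an odd perfect number must possess.
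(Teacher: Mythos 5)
Your proof is correct. Note that the paper itself gives no proof of this lemma at all --- it simply cites Sylvester's 1888 footnote --- so there is nothing internal to compare against; your abundancy-index argument is exactly the classical one behind that citation. You also correctly handle the two delicate points: the statement's factorization ``$105 = 2\cdot 5\cdot 7$'' is a typo for $3\cdot 5\cdot 7$ (forced anyway by $n$ odd), and the naive bound $\frac{4}{3}\cdot\frac{6}{5}\cdot\frac{8}{7} < 2$ would not suffice, so the essential input is Euler's form: $3 \equiv 7 \equiv 3 \pmod 4$ excludes both from being the special prime, giving exponents at least $2$ and hence $\sigma(n)/n \geq \frac{13}{9}\cdot\frac{6}{5}\cdot\frac{57}{49} = 2 + \frac{4}{245} > 2$, contradicting $\sigma(n) = 2n$.
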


For distinct prime numbers $p \neq q$ we denote by $ord_q(p)$ the smallest natural number $d$ for which $p^d \equiv 1 \pmod{q}$. We denote by $a_q(p)$ the integer $e$ such that $q^e \vert\vert p^d-1$, where $d = ord_q(p)$.

The following lemma appears in Pomerance's paper \cite[Lemma 1]{cinco}.
\begin{lemma}
\label{L2.6}
Suppose that $p,q$ are distinct primes with $q \neq 2$ and $b,c$ are natural numbers. Then
\begin{itemize}
\item[\rm{(a)}]
if $p \equiv 1 \pmod{q}$, then $q^b \vert\vert \sigma(p^c)$ if and only if $q^b \vert\vert c+1$,
\item[\rm{(b)}]
if $p \not \equiv 1 \pmod{q}$, then $q^b \vert\vert \sigma(p^c)$ if and only if $b \geq a_q(p),ord_q(p) \mid c+1$, and $q^{b-a_q(p)} \mid c+1$.
\end{itemize}
\end{lemma}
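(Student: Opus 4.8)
The plan is to reduce everything to a single $q$-adic valuation computation, exploiting the closed form $\sigma(p^c) = (p^{c+1}-1)/(p-1)$, so that $v_q(\sigma(p^c)) = v_q(p^{c+1}-1) - v_q(p-1)$, where $v_q$ denotes the $q$-adic valuation. First I would set $d = ord_q(p)$ and record the two elementary facts that drive the whole argument: for an odd prime $q$ coprime to $p$ one has $q \mid p^m - 1$ if and only if $d \mid m$, and whenever $d \mid m$ the lifting-the-exponent identity gives $v_q(p^m - 1) = v_q(p^d - 1) + v_q(m/d)$. By definition $v_q(p^d-1) = a_q(p)$, and since $d \mid q-1$ with $q$ prime we have $q \nmid d$, whence $v_q(m/d) = v_q(m)$; thus the identity simplifies to $v_q(p^m-1) = a_q(p) + v_q(m)$ whenever $d \mid m$. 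The main technical step is the lifting-the-exponent identity itself, which I would prove by writing $p^d = 1 + q^{a_q(p)} u$ with $q \nmid u$, expanding $(p^d)^{m/d} = (1 + q^{a_q(p)}u)^{m/d}$ by the binomial theorem, and checking that the linear term strictly dominates the $q$-adic size of every higher term, using $q$ odd, $a_q(p) \geq 1$, and the estimate $v_q\binom{k}{j} \geq v_q(k) - v_q(j)$ coming from $j\binom{k}{j} = k\binom{k-1}{j-1}$.

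With this in hand part (a) is immediate: here $p \equiv 1 \pmod q$ forces $d = 1$ and $a_q(p) = v_q(p-1)$, so applying the identity with $m = c+1$ gives $v_q(p^{c+1}-1) = v_q(p-1) + v_q(c+1)$, and subtracting $v_q(p-1)$ yields $v_q(\sigma(p^c)) = v_q(c+1)$. Hence $q^b \| \sigma(p^c)$ holds exactly when $q^b \| c+1$.

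For part (b) the hypothesis $p \not\equiv 1 \pmod q$ gives $v_q(p-1) = 0$, so $v_q(\sigma(p^c)) = v_q(p^{c+1}-1)$, and now $d \geq 2$. If $d \nmid c+1$ then $q \nmid p^{c+1}-1$, so no positive power of $q$ divides $\sigma(p^c)$; this matches the failure of the condition $ord_q(p) \mid c+1$. If instead $d \mid c+1$, the simplified identity gives $v_q(\sigma(p^c)) = a_q(p) + v_q(c+1)$. Therefore $q^b \| \sigma(p^c)$ holds precisely when $ord_q(p) \mid c+1$ and $b = a_q(p) + v_q(c+1)$; the latter equality unpacks into $b \geq a_q(p)$ together with $v_q(c+1) = b - a_q(p)$, i.e. $q^{b-a_q(p)} \| c+1$, which is exactly the stated list of conditions.

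I expect the only genuine obstacle to be the lifting-the-exponent identity, and specifically the verification that the higher binomial terms are $q$-adically negligible; everything afterwards is bookkeeping. Two minor points I would flag in writing up the details. The hypothesis $q \neq 2$ is used precisely at this step, since for $q=2$ and $a_q(p)=1$ the $j=2$ term is no longer strictly dominated and the identity needs a correction term, which is why the lemma excludes it. And the conclusions should be read for $b \geq 1$, the valuation statement being degenerate at $b = 0$.
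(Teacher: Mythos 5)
Your proof is correct, and there is nothing in the paper to compare it against: the authors do not prove this lemma at all, they quote it from Pomerance's paper \cite[Lemma 1]{cinco}. Your argument --- writing $\sigma(p^c)=(p^{c+1}-1)/(p-1)$, reducing everything to the identity $v_q(p^m-1)=a_q(p)+v_q(m)$ whenever $ord_q(p)\mid m$, and proving that identity by expanding $(1+q^{a_q(p)}u)^{m/d}$ binomially with the estimate $v_q\binom{k}{j}\geq v_q(k)-v_q(j)$ --- is the standard lifting-the-exponent route to this result, and your diagnosis that the hypothesis $q\neq 2$ is consumed precisely where the linear term must strictly dominate the higher binomial terms is also the right one. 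Your remark that the equivalences should be read for $b\geq 1$ is likewise apt (in part (b) with $b=0$ the left side can hold while $b\geq a_q(p)$ fails).

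One substantive point deserves emphasis. Your bookkeeping in part (b) produces the condition $q^{b-a_q(p)}\,\vert\vert\, c+1$ (exact divisibility), whereas the statement as printed in the paper has only $q^{b-a_q(p)}\mid c+1$. Your version is the correct one, and the printed $\mid$ is a typo: with plain divisibility the ``if'' direction fails. For instance, take $p=3$, $q=13$, $c=38$, $b=1$; then $ord_{13}(3)=3$ divides $39$, $a_{13}(3)=1\leq b$, and $13^{0}\mid 39$, yet $v_{13}(\sigma(3^{38}))=a_{13}(3)+v_{13}(39)=2$, so $13\,\vert\vert\,\sigma(3^{38})$ is false. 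Note that the paper itself applies the lemma in its exact-divisibility form in the proof of Theorem \ref{th1d2}, where $3^{2\beta}\,\vert\vert\,\sigma(5^{\alpha})$ is converted into $3^{2\beta-1}\,\vert\vert\,\alpha+1$; so your silent strengthening of the last condition is not a discrepancy but exactly what the paper needs, and it matches Pomerance's original statement.
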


A recent result of Yamada \cite{veintidos} is
\begin{lemma}
\label{L2.7}
If $n$ is an odd perfect number with $\beta = e_1= \cdots = e_t \geq 1$ then
$$
\omega(n) \leq 4 \beta^2+2 \beta + 3.
$$
\end{lemma}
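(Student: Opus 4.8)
The plan is to exploit the perfection identity $\sigma(n) = 2n$ together with the cyclotomic factorization of each $\sigma(q_i^{2\beta})$, and to bound $\omega(n) = t+1$ by controlling how the prime divisors of $n$ can reproduce one another. Writing $n = p^{\alpha} q_1^{2\beta}\cdots q_t^{2\beta}$ with the $q_i$ distinct and prime to $p$, I would first record the elementary but crucial observation that each $\sigma(q_i^{2\beta}) = 1 + q_i + \cdots + q_i^{2\beta}$ is odd, being a sum of $2\beta+1$ odd terms; since $\sigma(n) = \sigma(p^{\alpha})\prod_i \sigma(q_i^{2\beta}) = 2n$ with $2 \parallel \sigma(n)$, it follows that $\sigma(q_i^{2\beta}) \mid n$, so every prime factor of every $\sigma(q_i^{2\beta})$ lies in $\{p, q_1, \ldots, q_t\}$. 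I would then factor $\sigma(q_i^{2\beta}) = \prod_{d \mid 2\beta+1,\ d>1} \Phi_d(q_i)$ and invoke the standard classification of the prime divisors of $\Phi_d(q_i)$: such a prime $r$ is either \emph{intrinsic}, in which case $r \mid 2\beta+1$ and $r \parallel \Phi_d(q_i)$, or \emph{primitive}, in which case $\operatorname{ord}_r(q_i) = d$ and hence $r \equiv 1 \pmod d$.

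Next I would set up a reproduction graph on the vertex set $\{p, q_1, \ldots, q_t\}$, drawing an edge from $q_i$ to each prime factor of $\sigma(q_i^{2\beta})$. Because every $q_j$ divides $n$ and $2n$ is exactly the product of the $\sigma$-values, each vertex must receive an edge, which forces chains and cycles among the vertices governed by the congruences $r \equiv 1 \pmod d$ with $d \mid 2\beta+1$. The number of cyclotomic factors available at each vertex is $\tau(2\beta+1)-1 \le 2\beta$, and this supplies one factor of $2\beta$ in the target bound; the second factor of $2\beta$ should come from bounding, for a fixed target prime, how many distinct source primes $q_i$ can map onto it under the order/congruence constraints. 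Combining these two bounds and accounting separately for the single vertex $p$ and for the finitely many intrinsic primes dividing $2\beta+1$ (which contribute only the additive constant) should yield $\omega(n) \le 4\beta^2 + 2\beta + 3$.

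The main obstacle, and the genuinely delicate part, is the second counting step: showing that the primitive large primes cannot proliferate. A naive bound on $\omega(\sigma(q_i^{2\beta}))$ is useless here, since that quantity grows with $\log q_i$ and is not uniform in $\beta$ alone; the uniform control must instead be extracted from the global constraint that all these primes coincide with the finitely many $q_j$, combined with the rigidity of the congruences $r \equiv 1 \pmod d$. I expect this to require a careful injection or a pigeonhole argument on residues modulo the $q_j$, exactly as in Yamada's treatment, rather than any soft size estimate. Everything else — the oddness of $\sigma(q_i^{2\beta})$, the cyclotomic factorization, and the bookkeeping of the intrinsic primes — is routine by comparison.
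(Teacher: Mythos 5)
The first thing to say is that the paper contains no proof of this lemma to compare yours against: Lemma \ref{L2.7} is simply quoted from Yamada \cite{veintidos}. Your proposal must therefore stand on its own as a proof of Yamada's theorem, and it does not: the step you yourself call ``the genuinely delicate part'' is precisely the content of the theorem, and you defer it (``I expect this to require a careful injection or a pigeonhole argument\dots exactly as in Yamada's treatment'') instead of supplying it. The parts you do carry out are correct but routine: the oddness of $\sigma(q_i^{2\beta})$ and hence $\sigma(q_i^{2\beta}) \mid n$; the factorization $\sigma(q_i^{2\beta}) = \prod_{d \mid 2\beta+1,\, d>1} \Phi_d(q_i)$; the intrinsic/primitive dichotomy with $r \equiv 1 \pmod{d}$; and even the fiber bound you hope for --- at most $2\beta$ indices $i$ can satisfy $q_j \mid \sigma(q_i^{2\beta})$, since each contributes at least $1$ to the exponent of $q_j$ in $\sigma(n) = 2n$, which is exactly $2\beta$. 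None of this is where the theorem lives.

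Moreover, the graph-counting scheme as you describe it cannot close, for structural reasons, so the gap is not merely an omitted computation. To bound the number of vertices $t+1$ from ``every vertex receives an edge'' you need an upper bound on the number of \emph{distinct targets}, i.e.\ on the out-degrees $\omega(\sigma(q_i^{2\beta}))$; but, as you note yourself, these grow with $q_i$ and are not bounded in terms of $\beta$ alone. The bounds you do have point the wrong way: the in-degree bound of $2\beta$ per $q$-target caps the number of edges, and combined with ``every vertex is hit'' it yields only the vacuous inequality $t+1 \leq 2\beta t + (\text{contribution at } p)$. Worse, $p$ is not a harmless additive $+1$: as a \emph{target} it can absorb up to $\alpha$ sources (all $i$ with $\sigma(q_i^{2\beta})$ a power of $p$), and as a \emph{source} it can feed up to $\omega(\sigma(p^{\alpha}))$ distinct primes $q_j$; neither $\alpha$ nor $\omega(\sigma(p^{\alpha}))$ is controlled by $\beta$, and your sketch has no mechanism for either. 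Repairing this requires interweaving the congruence data with genuine size information (for instance $q_i^{2\beta} < \sigma(q_i^{2\beta}) < \tfrac{3}{2} q_i^{2\beta}$, and lower bounds for $\Phi_d(q_i)$), which is the actual substance of Yamada's argument; without it, what you have is a plan whose hard core is still the unproved theorem.
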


\section{Proof of Theorem \ref{th1d2}}

Since $6 = \Phi_2(5)$ divides $\sigma(5^{\alpha})$ and $M$ is square-free, one has $M = 3Q$ for some positive square-free integer $Q$ coprime with $3$, so that we can write $n = 5^{\alpha} 3^{2 \beta} Q^{2 \beta}$. Since from Lemma \ref{L2.1} one has $\omega(n) > 2$,it follows that $Q \neq 1$. Observe that $3^{2 \beta +1} \in \{3,2\} \pmod{5}$ so that $5 \nmid \sigma(3^{2 \beta})$. In other words, $5 \mid \sigma(5^{\alpha}) \sigma(3^{2 \beta})$. Thus, since $n$ is perfect we have that $5 \mid \sigma(q^{2 \beta})$ for some prime divisor $q$ of $Q$. 
Set now
\begin{equation}
\label{3.1}
S = \{p \equiv 1 \pmod{5} \colon p \mid Q\}\;\text{and}\; \gamma = \sharp(S).
\end{equation}
Since for each prime divisor $p$ of $Q$ with $p \not \equiv 1 \pmod{5}, d= ord_5(p) \in \{2,4\}$ is even, we have $\gamma > 0$ by Lemma \ref{L2.6}. More precisely, Lemma \ref{L2.6} implies that if a prime divisor $p$ of $Q$ satisfies $p \not \equiv  1 \pmod{5}$ then $5 \nmid \sigma(p^{2 \beta})$ since the even number $d$ cannot divide the odd number $c+1$, where $c = 2 \beta$.
In other words, $q \in S$, so that $S$ is not empty. We can then, by Lemma \ref{L2.6} a), since $q \equiv 1 \pmod{5}$ and since $5 \mid \sigma(q^{2 \beta})$, define a positive integer $b > 0$ by $5^b \vert\vert 2 \beta +1$. By Lemma \ref{L2.6}, and by the definition of $S$ we get $5^b \vert\vert \sigma(p^{2 \beta})$ for all $p \in S$. Thus, since $n$ is perfect we get
\begin{equation}
\label{3.2}
\alpha = b \gamma.
\end{equation}
By Lemma \ref{L2.3} we can assume that $3 \nmid 2 \beta +1$. By Lemma \ref{L2.6} it follows that $3 \nmid \sigma(Q^{2 \beta})$. But since trivially $3 \nmid \sigma(3^{2 \beta})$ and $n$ is perfect we must have $3^{2 \beta} \vert\vert \sigma(5^{\alpha})$. We can also obtain this from Lemma \ref{L2.4}. So by Pomerance's Lemma \ref{L2.6} we get
\begin{equation}
\label{3.3}
3^{2 \beta -1} \vert\vert \alpha + 1.
\end{equation}
Since $b \leq b \cdot \ln(5) \leq \ln(2 \beta +1)$, we get from \eqref{3.2} and \eqref{3.3}
\begin{equation}
\label{3.4}
\gamma \geq \frac{3^{2 \beta -1}-1}{\ln(2 \beta +1)}.
\end{equation}
But, trivially, $\gamma \leq \omega(n)$. So we get from Yamada's Lemma \ref{L2.7} and \eqref{3.4}
\begin{equation}
\label{3.5}
3^{2 \beta -1}-1 \leq \ln(2 \beta +1) \cdot (4 \beta^2+2 \beta +3).
\end{equation}
This forces $\beta \in \{1,2\}$ so that $\beta = 2$ since $\beta \not \equiv 1 \pmod{3}$.
This contradicts Lemma \ref{L2.2} thereby proving the theorem.

\section{Proof of Theorem \ref{1.4}}
\label{S4}

Part c) follows from part a). Part b) follows from part a) and from Proposition \ref{P1.5}. Now we prove part a). Observe that $\Phi_3(y)$ is odd for all integers $y$. Assume that for some $j \in \{1,\ldots,t\}$ the prime $q_j$ is good. Let $k \geq 0$ be a nonnegative integer. Observe that any element of $S_k(q_j)$ is a prime divisor of $n$ since $n$ is perfect. Observe also that for any index $i$, $5 \notin S_k(q_i)$ since for any integer $y$ one has $\Phi_3(y) \not \equiv 0 \pmod{5}$.  Thus, by definition that $q_j$ is good there exists an index $i \in \{1,\ldots,t\}$ and there exists a positive integer $k \geq 0$ such that the prime $q_i$ satisfies $q_i \in S_k(q_j)$, so in particular we obtain that $q_i \mid n$, and, moreover, one has $q_i \in \{2,4\} \pmod{7}$. Thus, $\Phi_3(q_i) \equiv 0 \pmod{7}$.

Since $3 \mid 6 k_i+3$ we have $\Phi_3(q_i) \mid \sigma(q_i^{6 k_i+2})$. Thus, $\Phi_3(q_i) \mid n$ since $n$ is perfect and $\Phi_3(q_i)$ is odd. But $7 \mid \Phi_3(q_i)$. So $7 \mid n$. This is impossible by Lemma \ref{L2.5} since $3$ and $5$ are also divisors of $n$. This proves the theorem.

We give below a sufficient condition in order that the condition on Theorem \ref{1.4} part a) holds.
\begin{proposition}
\label{P4.1}
Let $n$ be an odd perfect number such that
\begin{equation*}
15 \mid n,\;\;5^{\alpha} \vert\vert n\;\;\text{and}\;\;3^{2 b} \vert\vert n
\end{equation*}
for some positive integers $\alpha,b >0$. If
\begin{equation}
\label{E4.2}
3 \mid (\alpha+1)(2 b+1),
\end{equation}
then $n$ has a good prime divisor.
\end{proposition}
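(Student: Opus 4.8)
The plan is to use the cyclotomic factorization of the divisor sum and to split on the prime $3$ dividing the product $(\alpha+1)(2b+1)$, producing in each branch an \emph{explicit} good prime divisor of $n$. First I would recall the standard identity: for a prime $p$ and exponent $k$ one has $\sigma(p^k) = \prod_{d \mid k+1,\, d > 1} \Phi_d(p)$, so that whenever $d \mid k+1$ with $d > 1$ the factor $\Phi_d(p)$ divides $\sigma(p^k)$. Since $3$ is prime, the hypothesis \eqref{E4.2} forces either $3 \mid \alpha+1$ or $3 \mid 2b+1$, and I would treat these two cases in turn.

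In the case $3 \mid \alpha+1$, the factorization gives $\Phi_3(5) \mid \sigma(5^{\alpha})$, and a direct evaluation yields $\Phi_3(5) = 31$. Because $5^{\alpha} \mid\mid n$ and $n$ is perfect, $\sigma(5^{\alpha})$ divides $\sigma(n) = 2n$; as $31$ is odd, this forces $31 \mid n$. The prime $31$ was already shown to be good in the Introduction, via $S_1(31) = \{31, 331\}$ with $331 \equiv 2 \pmod{7}$. Hence $n$ has the good prime divisor $31$.

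In the case $3 \mid 2b+1$, the same factorization gives $\Phi_3(3) \mid \sigma(3^{2b})$, and here $\Phi_3(3) = 13$. Arguing exactly as before, $3^{2b} \mid\mid n$ together with perfection forces $13 \mid n$; and the Introduction already exhibits the chain $S_0(13), \ldots, S_7(13)$ showing that $13$ is good. Thus $n$ again possesses a good prime divisor, completing the two cases.

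I expect no serious obstacle here: the entire argument collapses to the two cyclotomic evaluations $\Phi_3(5) = 31$ and $\Phi_3(3) = 13$, combined with the goodness of these two primes, which is established in the Introduction. The only point demanding a little care is checking that the standing hypotheses $\alpha > 0$ and $b > 0$ ensure $\alpha + 1 \geq 3$ and $2b+1 \geq 3$ in the respective branches, so that the factor $\Phi_3$ genuinely appears in the relevant $\sigma$; this is immediate since $3 \mid \alpha+1$ with $\alpha \geq 1$ gives $\alpha+1 \geq 3$, and likewise for $2b+1$.
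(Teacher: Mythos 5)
Your proof is correct and takes essentially the same route as the paper: both split the hypothesis \eqref{E4.2} into the cases $3 \mid \alpha+1$ and $3 \mid 2b+1$, and in each case exhibit the explicit good prime divisor $31 = \Phi_3(5) \mid \sigma(5^{\alpha})$, respectively $13 = \Phi_3(3) \mid \sigma(3^{2b})$, whose goodness was established in the Introduction. Incidentally, your version even fixes a typo in the paper's proof, which prints ``$31 = \Phi_5(3)$'' where $\Phi_3(5)$ is meant (indeed $\Phi_5(3) = 121 \neq 31$).
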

\begin{proof}
If $3 \mid \alpha+1$ then one has $31 = \Phi_5(3) \mid \sigma(5^{\alpha})$. Thus, $31 \mid n$ and $31$ is good. If $3 \mid 2 b +1$ then  $13 = \Phi_3(3)  \mid \sigma(3^{2 b})$ so that $13 \mid n$ and $13$ is also good.
\end{proof}


\end{document}